\newcommand{\dbar}{\ensuremath{\overline\partial}}
\newcommand{\dbarstar}{\ensuremath{\overline\partial^*}}
\newcommand{\C}{\ensuremath{\mathbb{C}}}
\newcommand{\R}{\ensuremath{\mathbb{R}}}
\newcommand{\sumprime}{\if@display\sideset{}{'}\sum%
            \else\sum'\fi}
\begin{document}

\numberwithin{equation}{section}

\newtheorem{theorem}{Theorem}[section]
\newtheorem{proposition}[theorem]{Proposition}
\newtheorem{conjecture}[theorem]{Conjecture}
\def\theconjecture{\unskip}
\newtheorem{corollary}[theorem]{Corollary}
\newtheorem{lemma}[theorem]{Lemma}
\newtheorem{observation}[theorem]{Observation}
\theoremstyle{definition}
\newtheorem{definition}{Definition}
\numberwithin{definition}{section}
\newtheorem{remark}{Remark}
\def\theremark{\unskip}
\newtheorem{question}{Question}
\def\thequestion{\unskip}
\newtheorem{example}{Example}
\def\theexample{\unskip}
\newtheorem{problem}{Problem}

\def\vvv{\ensuremath{\mid\!\mid\!\mid}}
\def\intprod{\mathbin{\lr54}}
\def\reals{{\mathbb R}}
\def\integers{{\mathbb Z}}
\def\N{{\mathbb N}}
\def\complex{{\mathbb C}\/}
\def\dist{\operatorname{dist}\,}
\def\spec{\operatorname{spec}\,}
\def\interior{\operatorname{int}\,}
\def\trace{\operatorname{tr}\,}
\def\cl{\operatorname{cl}\,}
\def\essspec{\operatorname{esspec}\,}
\def\range{\operatorname{\mathcal R}\,}
\def\kernel{\operatorname{\mathcal N}\,}
\def\dom{\operatorname{\mathcal D}\,}
\def\linearspan{\operatorname{span}\,}
\def\lip{\operatorname{Lip}\,}
\def\sgn{\operatorname{sgn}\,}
\def\Z{ {\mathbb Z} }
\def\e{\varepsilon}
\def\p{\partial}
\def\rp{{ ^{-1} }}
\def\Re{\operatorname{Re\,} }
\def\Im{\operatorname{Im\,} }
\def\dbarb{\bar\partial_b}
\def\eps{\varepsilon}
\def\O{\Omega}
\def\Lip{\operatorname{Lip\,}}

\def\Hs{{\mathcal H}}
\def\E{{\mathcal E}}
\def\scriptu{{\mathcal U}}
\def\scriptr{{\mathcal R}}
\def\scripta{{\mathcal A}}
\def\scriptc{{\mathcal C}}
\def\scriptd{{\mathcal D}}
\def\scripti{{\mathcal I}}
\def\scriptk{{\mathcal K}}
\def\scripth{{\mathcal H}}
\def\scriptm{{\mathcal M}}
\def\scriptn{{\mathcal N}}
\def\scripte{{\mathcal E}}
\def\scriptt{{\mathcal T}}
\def\scriptr{{\mathcal R}}
\def\scripts{{\mathcal S}}
\def\scriptb{{\mathcal B}}
\def\scriptf{{\mathcal F}}
\def\scriptg{{\mathcal G}}
\def\scriptl{{\mathcal L}}
\def\scripto{{\mathfrak o}}
\def\scriptv{{\mathcal V}}
\def\frakg{{\mathfrak g}}
\def\frakG{{\mathfrak G}}

\def\ov{\overline}

\author{Siqi Fu}
\thanks
{The author was supported in part by NSF grant DMS-0805852.}
\address{Department of Mathematical Sciences,
Rutgers University-Camden, Camden, NJ 08102}
\email{sfu@camden.rutgers.edu}
\title[]  
{Positivity of the $\bar\partial$-Neumann Laplacian}
\maketitle

\leftline{\it Dedicated to Professor Linda Rothschild}

\bigskip

\begin{abstract}
We study the $\bar\partial$-Neumann Laplacian from spectral theoretic perspectives.
In particular, we show how pseudoconvexity of a bounded domain is characterized by positivity of the $\bar\partial$-Neumann Laplacian.

\bigskip
\noindent{{\sc Mathematics Subject Classification} (2000): 32W05.}

\smallskip

\noindent{{\sc Keywords}: Pseudoconvex, $\bar\partial$-Neumann Laplacian, Dolbeault cohomology, $L^2$-cohomology.}
\end{abstract}

\bigskip



\section{Introduction}\label{sec:intro}

Whether or not a given system has positive ground state energy is
a widely studied problem with significant repercussions in physics, particularly in quantum mechanics. It follows from the classical Hardy inequality that the bottom of the spectrum
of the Dirichlet Laplacian on a domain in $\R^n$ that satisfies the outer cone
condition is positive if and only if its inradius is finite (see~\cite{Davies95}).
Whereas spectral behavior of the Dirichlet Laplacian is insensitive to boundary
geometry, the story for the $\bar\partial$-Neumann Laplacian is different.
Since the work of Kohn \cite{Kohn63, Kohn64} and H\"{o}rmander \cite{Hormander65},
it has been known that existence and regularity of the $\bar\partial$-Neumann Laplacian
closely depend on the underlying geometry (see the surveys \cite{BoasStraube99, Christ99, DangeloKohn99, FuStraube01} and the monographs \cite{ChenShaw99, Straube09}).

Let $\Omega$ be a domain in $\C^n$.  It follows from the classical Theorem B of Cartan
that if $\Omega$ is pseudoconvex, then the Dolbeault cohomology groups $H^{0, q}(\Omega)$
vanish for all $q\ge 1$. (More generally, for any coherent analytic sheaf $\scriptf$ over
a Stein manifold, the sheaf cohomology groups $H^q(X, \scriptf)$ vanish for all $q\ge 1$.) The converse is also true (\cite{Serre53}, p.~65). Cartan's Theorem B and its converse were generalized by Laufer \cite{Laufer66} and Siu \cite{Siu67} to a Riemann domain over a Stein manifold.  When $\Omega$ is bounded, it follows from H\"{o}rmander's $L^2$-existence theorem for the $\dbar$-operator that if $\Omega$ is in addition pseudoconvex, then the $L^2$-cohomology groups $\widetilde H^{0, q}(\Omega)$ vanish for $q\ge 1$. The converse of H\"{o}rmander's theorem also holds, under the assumption that the interior of the closure of $\Omega$ is the domain itself.  Sheaf theoretic arguments for the Dolbeault cohomology groups can be modified to give a proof of this fact (cf.~\cite{Serre53, Laufer66, Siu67, Broecker83, Ohsawa88}; see also \cite{Fu05} and Section~\ref{sec:finite} below).

In this expository paper, we study positivity of the $\dbar$-Neumann Laplacian, in connection with the above-mentioned classical results, through the lens of spectral theory.  Our emphasis is on the interplay between spectral behavior of the $\dbar$-Neumann Laplacian and the geometry of the domains. This is evidently motivated by Marc Kac's famous question ``Can one hear the shape of a drum?" \cite{Kac66}.  Here we are interested in determining the geometry of a domain in $\C^n$ from the spectrum of the $\dbar$-Neumann Laplacian. (See \cite{Fu05, Fu08} for related results.) We make an effort to present a more accessible and self-contained treatment, using extensively spectral theoretic language but bypassing sheaf cohomology theory.

\section{Preliminaries}\label{sec:pre}

In this section, we review the spectral theoretic setup for the $\bar\partial$-Neumann
Laplacian. The emphasis here is slightly different from the one in the extant literature (cf.~\cite{FollandKohn72, ChenShaw99}).  The $\dbar$-Neumann Laplacian is defined through its associated quadratic form. As such, the self-adjoint
property and the domain of its square root come out directly from the definition.

Let $Q$ be a non-negative, densely defined, and closed
sesquilinear form on a complex Hilbert space $H$ with domain $\dom(Q)$. Then $Q$
uniquely determines a non-negative and
self-adjoint operator $S$ such that
$\dom(S^{1/2})=\dom(Q)$ and
\[
Q(u, v)=\langle S^{1/2}u, \; S^{1/2}v\rangle
\]
for all $u, v\in \dom(Q)$. (See Theorem 4.4.2 in \cite{Davies95}, to which we refer the reader
for the necessary spectral theoretic background used in this paper.)
For any subspace
$L\subset\dom(Q)$, let $\lambda(L)=\sup\{Q(u, u) \mid  u\in L,
\|u\|=1\}$.  For any positive integer $j$, let
\begin{equation}\label{minmax}
\lambda_{j} (Q)=\inf\{\lambda(L) \mid L\subset \dom(Q),
\dim(L)=j\}.
\end{equation}
The resolvent set $\rho(S)$ of $S$ consists of all $\lambda\in\C$ such that
the operator $S-\lambda I\colon\dom(S)\to H$ is both one-to-one and onto (and hence has
a bounded inverse by the closed graph theorem). The spectrum $\sigma(S)$, the complement of $\rho(S)$ in $\C$, is a non-empty closed subset of $[0, \ \infty)$.
Its bottom $\inf\sigma(S)$ is given by $\lambda_1(Q)$.  The essential spectrum $\sigma_e(S)$ is a closed subset of $\sigma(S)$ that consists of isolated eigenvalues
of infinite multiplicity and accumulation points of the spectrum.
It is empty if and only if $\lambda_{j}(Q)\to\infty$ as $j\to\infty$. In this case, $\lambda_{j}(Q)$ is the $j^{\text
{th}}$ eigenvalue of $S$, arranged in increasing order and repeated according to multiplicity.
The bottom of the essential spectrum $\inf\sigma_e(T)$ is the limit of $\lambda_j(Q)$ as $j\to\infty$. (When $\sigma_e(S)=\emptyset$, we set $\inf\sigma_e(S)=\infty$.)

Let $T_k\colon H_k\to
H_{k+1}$, $k=1, 2$, be densely defined and closed operators on
Hilbert spaces. Assume that $\range(T_1)\subset \kernel(T_2)$,
where $\range$ and $\kernel$ denote the range and kernel of the
operators. Let $T^*_k$ be the Hilbert space adjoint of $T_k$,
defined in the sense of Von Neumann by
\[
\dom(T^*_k)=\{u\in H_{k+1} \mid
\exists C>0, |\langle u, T_k v\rangle|\le C \|v\|,
\forall v\in \dom(T_k)\}
\]
and
\[
\langle T^*_k u, v\rangle=\langle u, T_k v\rangle, \quad \text{for all }
u\in\dom(T^*_k) \text{ and } v\in\dom(T_k).
\]
Then
$T^*_k$ is also densely defined and closed.  Let
\[
Q(u, v)=\langle T^*_1 u, T^*_1 v\rangle+\langle T_2 u, T_2 v\rangle
\]
with its domain given by $\dom(Q)=\dom(T^*_1)\cap\dom(T_2)$. The following proposition elucidates the above approach to the $\dbar$-Neumann Laplacian.

\begin{proposition}\label{prop:spectral1} $Q(u, v)$ is a densely defined, closed, non-negative sesquilinear form. The associated self-adjoint operator $\square$ is given by
\begin{equation}\label{eq:square}
\dom(\square)=\{f\in H_2 \mid f\in\dom(Q), T_2 f\in \dom(T_2^*), T_1^* f\in\dom(T_1)\},
\quad \square=T_1T_1^*+T_2^*T_2.
\end{equation}
\end{proposition}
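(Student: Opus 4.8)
The plan is to verify directly that $Q$ satisfies the hypotheses needed to invoke the form-representation theorem (Theorem 4.4.2 in \cite{Davies95}), and then to identify the resulting self-adjoint operator with $T_1T_1^*+T_2^*T_2$ on the domain claimed in~\eqref{eq:square}. First I would establish that $Q$ is densely defined: since $\dom(T_1^*)$ and $\dom(T_2)$ are each dense, and in fact the obstruction vanishes because $\range(T_1)\subset\kernel(T_2)$, one gets density of $\dom(Q)=\dom(T_1^*)\cap\dom(T_2)$ by a standard orthogonal-decomposition argument: decompose $H_2=\overline{\range(T_1)}\oplus\kernel(T_1^*)$, note $\range(T_1)\subset\dom(T_2)$ trivially (as $T_2T_1=0$ there) and $\kernel(T_1^*)\subset\dom(T_1^*)$, then approximate within each summand. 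Non-negativity is immediate since $Q(u,u)=\|T_1^*u\|^2+\|T_2u\|^2\ge 0$.

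Next I would prove closedness of $Q$. Equip $\dom(Q)$ with the graph norm $\|u\|_Q^2=\|u\|^2+Q(u,u)=\|u\|^2+\|T_1^*u\|^2+\|T_2u\|^2$. If $u_n\to u$ in $H_2$ and $\{u_n\}$ is Cauchy in $\|\cdot\|_Q$, then $\{T_1^*u_n\}$ and $\{T_2u_n\}$ are Cauchy in $H_1$ and $H_3$ respectively, hence convergent; since $T_1^*$ and $T_2$ are each closed, $u\in\dom(T_1^*)\cap\dom(T_2)=\dom(Q)$ with $T_1^*u_n\to T_1^*u$ and $T_2u_n\to T_2u$, so $u_n\to u$ in $\|\cdot\|_Q$. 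Thus $Q$ is closed, and Theorem 4.4.2 of \cite{Davies95} yields a unique non-negative self-adjoint $\square$ with $\dom(\square^{1/2})=\dom(Q)$ and $Q(u,v)=\langle\square^{1/2}u,\square^{1/2}v\rangle$.

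It remains to identify $\square$ explicitly. By the characterization coming from the representation theorem, $f\in\dom(\square)$ with $\square f=g$ precisely when $f\in\dom(Q)$ and $Q(f,v)=\langle g,v\rangle$ for all $v\in\dom(Q)$. Writing this out, $\langle T_1^*f,T_1^*v\rangle+\langle T_2f,T_2v\rangle=\langle g,v\rangle$. Using the orthogonal decomposition of $v$ into a piece in (the closure of) $\range(T_1)$ and a piece in $\kernel(T_1^*)$, and testing against each, the first inner product shows $T_1^*f\in\dom((T_1^*)^*)=\dom(T_1)$ (here one uses that $T_1$ is closed, so $(T_1^*)^*=T_1$) with $\langle T_1T_1^*f,v\rangle$ appearing, and the second shows $T_2f\in\dom(T_2^*)$ with $\langle T_2^*T_2f,v\rangle$ appearing; combining gives $g=T_1T_1^*f+T_2^*T_2f$. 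Conversely, any $f$ in the set on the right-hand side of~\eqref{eq:square} is readily checked to satisfy the form identity with $g=T_1T_1^*f+T_2^*T_2f$, using the definition of adjoint to move $T_1^*,T_2$ across the inner products. This establishes~\eqref{eq:square}.

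The main obstacle I anticipate is the careful handling of the orthogonal decomposition argument when splitting the test vector $v$: one must check that the two components lie in $\dom(Q)$ (not merely in $H_2$), which is exactly where the hypothesis $\range(T_1)\subset\kernel(T_2)$ does its work — it guarantees the $\range(T_1)$-component is annihilated by $T_2$ and so lies in $\dom(T_2)$, while the $\kernel(T_1^*)$-component is annihilated by $T_1^*$. Keeping track of closures (working with $\overline{\range(T_1)}$ versus $\range(T_1)$ and passing to limits) is the delicate bookkeeping step, but it is routine once the decomposition is set up correctly.
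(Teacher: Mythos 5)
Your proposal is correct and follows essentially the same route as the paper: verify density, closedness, and non-negativity directly, invoke the form-representation theorem, and then identify $\dom(\square)$ by splitting test vectors orthogonally and using $\range(T_1)\subset\kernel(T_2)$ to keep both components in $\dom(Q)$. Your decomposition $H_2=\overline{\range(T_1)}\oplus\kernel(T_1^*)$ is a harmless variant of the paper's $\kernel(T_2)\oplus\kernel(T_2)^\perp$ (and coincides with the one the paper uses for the $T_2$ half of the domain identification); just note that for density the elements of $\range(T_1)$ need not lie in $\dom(T_1^*)$, so the approximation in the first summand should go through projecting $\dom(T_1^*)$ onto $\overline{\range(T_1)}\subset\kernel(T_2)$, exactly the ``delicate bookkeeping'' you already anticipate.
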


\begin{proof} The closedness of $Q$ follows easily from that of $T_1$ and $T_2$. The non-negativity is evident. We now prove that $\dom(Q)$ is dense in $H_2$. Since
$\kernel(T_2)^\perp=\overline{\range(T_2^*)}\subset \kernel(T_1^*)$ and
\[
\dom(T_2)=\kernel (T_2)\oplus\big(\dom(T_2)\cap \kernel(T_2)^\perp\big),
\]
we have
\[
\dom(Q)=\dom(T_1^*)\cap\dom(T_2)=\big(\kernel (T_2)\cap\dom(T_1^*)\big)\oplus\big(\dom(T_2)\cap \kernel(T_2)^\perp\big).
\]
Since $\dom(T_1^*)$ and $\dom(T_2)$ are dense in $H_2$, $\dom(Q)$ is dense in $\kernel(T_2)\oplus
\kernel (T_2)^\perp=H_2$.

It follows from the above definition of $\square$ that $f\in\dom(\square)$ if and only if $f\in\dom(Q)$
and there exists a $g\in H_2$ such that
\begin{equation}\label{eq:q}
Q(u, f)=\langle u, g\rangle, \quad \text{for all } u\in\dom(Q)
\end{equation}
(cf.~Lemma 4.4.1 in \cite{Davies95}). Thus
\[
\dom(\square)\supset \{f\in H_2 \mid f\in\dom(Q), T_2 f\in \dom(T_2^*), T_1^* f\in\dom(T_1)\}.
\]
We now prove the opposite containment. Suppose $f\in\dom(\square)$. For any $u\in\dom(T_2)$, we write $u=u_1+u_2\in (\kernel(T_1^*)\cap\dom(T_2))\oplus \kernel(T_1^*)^\perp$. Note that $\kernel(T_1^*)^\perp \subset\range(T_2^*)^\perp=\kernel(T_2)$. It follows from
\eqref{eq:q} that
\[
|\langle T_2 u, T_2 f\rangle|=|\langle T_2 u_1, T_2 f\rangle=|Q(u_1, f)|=|\langle u_1, g\rangle|\le \|u\|\cdot\|g\|.
\]
Hence $T_2 f\in\dom(T_2^*)$. The proof of $T_1^* f\in\dom(T_1)$ is similar. For any $w\in\dom(T_1^*)$, we write $w=w_1+w_2\in (\kernel(T_2)\cap\dom(T_1^*))\oplus\kernel(T_2)^\perp$. Note that $\kernel(T_2)^\perp=\overline{\range(T_2^*)}\subset \kernel(T_1^*)$. Therefore, by \eqref{eq:q},
\[
|\langle T^*_1 w, T^*_1 f\rangle|=|\langle T_1^* w_1, T^*_1 f\rangle=|Q(w_1, f)|=|\langle w_1, g\rangle|\le \|w\|\cdot\|g\|.
\]
Hence $T_1^* f\in\dom(T_1^{**})=\dom(T_1)$. It follows from the definition of $\square$ that for any $f\in\dom(\square)$ and $u\in\dom(Q)$,
\[
\langle\square f, u\rangle=\langle \square^{1/2} f, \square^{1/2} u\rangle=Q(f, u)=\langle T_1^* f, T_1^*u\rangle + \langle T_2 f, T_2 u\rangle
=\langle (T_1T_1^*+T^*_2T_2)f, u\rangle.
\]
Hence $\square=T_1T_1^*+T^*_2T_2$. \end{proof}

The following proposition is well-known (compare
\cite{Hormander65}, Theorem 1.1.2 and Theorem 1.1.4; \cite{Catlin83}, Proposition 3;
and \cite{Shaw92}, Proposition 2.3). We provide a proof here for
completeness.

\begin{proposition}\label{prop:spectral2} $\inf\sigma(\square)>0$ if and only if
$\range(T_1)=\kernel(T_2)$ and $\range(T_2)$ is closed.
\end{proposition}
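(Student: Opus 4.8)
The plan is to establish the equivalence by working with the Hodge-type orthogonal decomposition of $H_2$ induced by $T_1$ and $T_2$, and to use Proposition~\ref{prop:spectral1} together with the variational characterization $\inf\sigma(\square)=\lambda_1(Q)$ recorded in Section~\ref{sec:pre}. First I would record the orthogonal decomposition
\[
H_2=\overline{\range(T_1)}\oplus\scripth\oplus\overline{\range(T_2^*)},
\]
where $\scripth=\kernel(T_1^*)\cap\kernel(T_2)$, which is valid since $\range(T_1)\subset\kernel(T_2)$ forces $\overline{\range(T_2^*)}\subset\kernel(T_1^*)^{\perp}\cap$-type orthogonality to hold; more precisely $\overline{\range(T_1)}\perp\overline{\range(T_2^*)}$ because $\range(T_1)\subset\kernel(T_2)=\overline{\range(T_2^*)}^{\perp}$. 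The harmonic space $\scripth$ is exactly $\kernel(\square)$: indeed $\square f=0$ iff $Q(f,f)=0$ iff $T_1^*f=0$ and $T_2f=0$. Hence the first, easy observation is that $\inf\sigma(\square)>0$ forces $\scripth=\{0\}$, i.e.\ $\kernel(T_1^*)\cap\kernel(T_2)=\{0\}$, which combined with the decomposition gives $\kernel(T_2)=\overline{\range(T_1)}$; so a necessary condition is already that $\range(T_1)$ is dense in $\kernel(T_2)$.

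Next I would prove the two implications separately. For the ``only if'' direction, assume $\inf\sigma(\square)=c>0$. On $\overline{\range(T_1)}\cap\dom(Q)$ we have $T_2u=0$, so $Q(u,u)=\|T_1^*u\|^2\ge c\|u\|^2$; since $T_1^*$ restricted here has closed range considerations, this yields that $T_1^*$ has closed range and hence $\range(T_1)$ is closed, giving $\range(T_1)=\kernel(T_2)$. Symmetrically, testing $Q$ on $\overline{\range(T_2^*)}\cap\dom(Q)$ (where $T_1^*u=0$ since $\overline{\range(T_2^*)}\subset\kernel(T_1^*)$) gives $\|T_2u\|^2\ge c\|u\|^2$, so $T_2$ has closed range. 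The clean way to package the ``closed range from a lower bound on the form'' step is: if $\|T^*u\|\ge\sqrt c\,\|u\|$ for all $u$ in $\dom(T^*)\cap\overline{\range(T)}$, then for $g\in\overline{\range(T)}$ the functional $Tv\mapsto\langle g,v\rangle$ is bounded, hence $g\in\dom(T^{*})$ wait — the standard argument is that this estimate implies solvability of $Tv=g$ with $\|v\|\le c^{-1/2}\|g\|$, so $\range(T)\supset\overline{\range(T)}$.

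For the ``if'' direction, assume $\range(T_1)=\kernel(T_2)$ and $\range(T_2)$ is closed. Then $\scripth=\{0\}$, so $\dom(Q)=\big(\overline{\range(T_1)}\cap\dom(Q)\big)\oplus\big(\overline{\range(T_2^*)}\cap\dom(Q)\big)$, and the two summands are $Q$-orthogonal because on the first $T_2u=0$ and on the second $T_1^*u=0$. Closedness of $\range(T_1)$ gives, via the open mapping theorem applied to $T_1^*$ on $\overline{\range(T_2^*)}$ — here I use $\range(T_1^*)=\overline{\range(T_1^*)}$, which is equivalent to $\range(T_1)$ closed — a constant $c_1>0$ with $\|T_1^*u\|^2\ge c_1\|u\|^2$ on $\overline{\range(T_1)}\cap\dom(Q)$; similarly closedness of $\range(T_2)$ gives $c_2>0$ with $\|T_2u\|^2\ge c_2\|u\|^2$ on $\overline{\range(T_2^*)}\cap\dom(Q)$. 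Adding, $Q(u,u)\ge\min(c_1,c_2)\|u\|^2$ for all $u\in\dom(Q)$, hence $\inf\sigma(\square)=\lambda_1(Q)\ge\min(c_1,c_2)>0$.

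The main obstacle, and the point that needs the most care, is the equivalence ``$\range(T)$ closed $\iff$ a Poincaré-type inequality $\|T^*u\|\ge c\|u\|$ holds on $\overline{\range(T)}\cap\dom(T^*)$'', together with the matching statement $\range(T)$ closed $\iff$ $\range(T^*)$ closed. These are standard facts about closed densely defined operators between Hilbert spaces (closed range theorem), but one must be careful to restrict to the right orthogonal complements — $T^*$ need not be injective on all of $\dom(T^*)$, only on $\kernel(T)^{\perp}=\overline{\range(T^*)}$ — and to separate the ``existence'' (solvability of $Tv=g$) from the ``estimate'' consistently on both sides. Everything else is bookkeeping with the orthogonal decomposition already used in the proof of Proposition~\ref{prop:spectral1}.
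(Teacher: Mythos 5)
Your proof is correct, and your ``if'' direction coincides with the paper's: decompose $u=u_1+u_2\in\kernel(T_2)\oplus\kernel(T_2)^\perp$, get a coercivity constant on each summand from the closed-range hypotheses (H\"{o}rmander's a priori estimate $\Leftrightarrow$ closed range equivalence), and add. Where you genuinely diverge is the ``only if'' direction. The paper exploits the fact that $0\in\rho(\square)$ to produce the bounded inverse $G=\square^{-1}$ and writes $u=T_1T_1^*Gu+T_2^*T_2Gu$; for $u\in\kernel(T_2)$ the second term is shown to vanish, so $u=T_1(T_1^*Gu)$ \emph{exhibits} a preimage, giving $\range(T_1)=\kernel(T_2)$ (and closedness for free) without any appeal to the closed range theorem there. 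You instead restrict the lower bound $Q(u,u)\ge c\|u\|^2$ to the two summands of the Hodge decomposition (after noting $\scripth=\kernel(Q)=\{0\}$) to obtain $\|T_1^*u\|^2\ge c\|u\|^2$ on $\kernel(T_1^*)^\perp\cap\dom(T_1^*)$ and $\|T_2u\|^2\ge c\|u\|^2$ on $\kernel(T_2)^\perp\cap\dom(T_2)$, and then invoke the estimate-implies-solvability lemma in both places. This buys symmetry (the same lemma drives both directions) at the cost of relying twice on that duality argument, which your own last paragraph correctly identifies as the point needing care; the paper's route is slightly more self-contained for the $\range(T_1)=\kernel(T_2)$ claim. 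Two small slips to clean up: the functional in your parenthetical false start should be $T_1^*u\mapsto\langle u,g\rangle$ on $\range(T_1^*)$, not $T_1v\mapsto\langle g,v\rangle$ (your corrected statement afterwards is the right one), and ``the open mapping theorem applied to $T_1^*$ on $\overline{\range(T_2^*)}$'' should read $\overline{\range(T_1)}$. You should also record, as the paper does, why the orthogonal projections onto $\kernel(T_2)$ and $\kernel(T_2)^\perp$ preserve $\dom(Q)$ (it uses $\kernel(T_2)^\perp\subset\kernel(T_1^*)$).
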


\begin{proof}
Assume $\inf\sigma(\square)>0$. Then $0$ is in the resolvent set of
$\square$ and hence $\square$ has a bounded inverse $G\colon H_2\to\dom(\square)$.
For any $u\in H_2$, write $u=T_1 T_1^*Gu +T_2^*T_2 Gu$. If $u\in
\kernel(T_2)$, then $0=(T_2u, \ T_2 Gu)=(T_2T_2^*T_2Gu,\ T_2Gu)
=(T_2^* T_2Gu, \ T_2^*T_2Gu)$.  Hence $T_2^*T_2Gu=0$ and
$u=T_1T_1^*Gu$. Therefore, $\range(T_1)=\kernel(T_2)$. Similarly,
$\range(T_2^*)=\kernel(T_1^*)$. Therefore $T_2^*$ and hence $T_2$ have closed
range. To prove the opposite implication,
we write $u=u_1+u_2 \in \kernel(T_2)\oplus\kernel(T_2)^\perp$, for any $u\in\dom(Q)$.
Note that $u_1, u_2\in\dom(Q)$. It follows
from $\kernel(T_2)=\range(T_1)$ and the closed range property of $T_2$
that there exists a positive constant $c$ such that $c\|u_1\|^2\le \|T_1^* u_1\|^2$
and $c\|u_2\|^2\le \|T_2 u_2\|^2$. Thus
\[
c\|u\|^2=c(\|u_1\|^2+\|u_2\|^2)\le
\|T_1^* u_1\|^2+\|T_2 u_2\|^2=Q(u, u).
\]
Hence $\inf\sigma (\square)\ge c>0$ (cf.~Theorem 4.3.1 in \cite{Davies95}).
\end{proof}

Let $\kernel(Q)=\kernel(T_1^*)\cap\kernel(T_2)$. Note that when it is non-trivial,
$\kernel(Q)$ is the eigenspace of the zero eigenvalue of $\square$. When $\range(T_1)$ is closed, $\kernel(T_2)=\range(T_1)\oplus\kernel(Q)$. For a subspace $L\subset H_2$, denote by $P_{L^\perp}$ the orthogonal projection onto $L^\perp$ and $T_2|_{L^\perp}$ the restriction of $T_2$ to $L^\perp$. The next proposition clarifies and strengthens the second part of Lemma 2.1 in \cite{Fu05}.

\begin{proposition}\label{prop:spectral3} The following statements are equivalent:
\begin{enumerate}
\item $\inf\sigma_e(\square)>0$.

\item $\range(T_1)$ and $\range(T_2)$ are closed and $\kernel(Q)$ is
finite dimensional.

\item There exists a finite dimensional subspace $L\subset \dom(T_1^*)\cap\kernel(T_2)$
such that $\kernel(T_2)\cap L^\perp =P_{L^\perp}(\range(T_1))$ and $\range(T_2|_{L^\perp})$ is closed.
\end{enumerate}
\end{proposition}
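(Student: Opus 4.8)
The plan is to establish $(1)\Leftrightarrow(2)$ by means of an orthogonal decomposition of $H_2$ that block-diagonalizes $\square$, and then $(2)\Leftrightarrow(3)$ by elementary operator theory about ranges of finite codimension.

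For $(1)\Leftrightarrow(2)$ I would begin from the orthogonal decomposition
\[
H_2=\overline{\range(T_1)}\oplus\kernel(Q)\oplus\overline{\range(T_2^*)},
\]
which follows from $\overline{\range(T_1)}^\perp=\kernel(T_1^*)$, $\overline{\range(T_2^*)}^\perp=\kernel(T_2)$, the inclusion $\overline{\range(T_2^*)}\subseteq\kernel(T_1^*)$ (a consequence of $\range(T_1)\subseteq\kernel(T_2)$, already used in the proof of Proposition~\ref{prop:spectral1}), and $\kernel(Q)=\kernel(T_1^*)\cap\kernel(T_2)$. The crucial point is that this decomposition \emph{reduces} the form $Q$: using the orthogonal splitting $\dom(T)=\kernel(T)\oplus(\dom(T)\cap\kernel(T)^\perp)$ of a closed operator by its kernel, applied to $T_1^*$ and to $T_2$, one checks that each $u\in\dom(Q)$ decomposes as $u=u_1+u_0+u_3$ along the three summands above with all three pieces again in $\dom(Q)$, that all cross terms of $Q$ vanish, and that $Q(u,u)=\|T_1^*u_1\|^2+\|T_2u_3\|^2$. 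Hence $\square=\square_1\oplus0\oplus\square_3$, where $\square_1$ and $\square_3$ are the self-adjoint operators determined by the forms $\|T_1^*\cdot\|^2$ on $\overline{\range(T_1)}$ and $\|T_2\cdot\|^2$ on $\overline{\range(T_2^*)}$; both are injective, since $\overline{\range(T_1)}\cap\kernel(T_1^*)=\{0\}$ and $\overline{\range(T_2^*)}\cap\kernel(T_2)=\{0\}$. For self-adjoint operators the essential spectrum of a finite orthogonal direct sum is the union of the essential spectra, and $0$ lies in the essential spectrum of the zero summand on $\kernel(Q)$ precisely when $\dim\kernel(Q)=\infty$; thus $\inf\sigma_e(\square)>0$ iff $\kernel(Q)$ is finite-dimensional and $\inf\sigma_e(\square_1)>0$ and $\inf\sigma_e(\square_3)>0$. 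Because $\square_1$ is injective and non-negative, $0$ is not an eigenvalue; since any spectral point of a self-adjoint operator lying below its essential spectrum is an isolated eigenvalue of finite multiplicity, $\inf\sigma_e(\square_1)>0$ implies $\inf\sigma(\square_1)>0$ (the reverse being trivial). By Theorem~4.3.1 of \cite{Davies95} the condition $\inf\sigma(\square_1)>0$ is equivalent to the coercive estimate $\|T_1^*u\|\ge c\|u\|$ on $\dom(T_1^*)\cap\overline{\range(T_1)}$ for some $c>0$, hence (closed range theorem) to $\range(T_1)$ being closed; the same reasoning gives $\inf\sigma_e(\square_3)>0\Leftrightarrow\range(T_2)$ closed. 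Assembling these equivalences yields $(1)\Leftrightarrow(2)$.

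For $(2)\Leftrightarrow(3)$ I would first observe that $\range(T_2|_{L^\perp})=\range(T_2)$ whenever $L\subseteq\kernel(T_2)$, since $T_2u=T_2P_{L^\perp}u$ for every $u\in\dom(T_2)$. Given $(2)$, choose $L=\kernel(Q)$; since $\range(T_1)$ is closed, $\kernel(T_2)=\range(T_1)\oplus\kernel(Q)$ orthogonally, whence $\kernel(T_2)\cap L^\perp=\range(T_1)=P_{L^\perp}(\range(T_1))$ and $\range(T_2|_{L^\perp})=\range(T_2)$ is closed, which is $(3)$. Conversely, given $L$ as in $(3)$, writing an arbitrary $w\in\kernel(T_2)$ as $P_Lw+P_{L^\perp}w$ and using $\kernel(T_2)\cap L^\perp=P_{L^\perp}(\range(T_1))$ gives $\kernel(T_2)=L+\range(T_1)$, so $\range(T_1)$ has finite codimension in the closed subspace $\kernel(T_2)$. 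I would then invoke the standard fact that a densely defined closed operator whose range has finite codimension in a closed subspace of the target has closed range — proved by applying the bounded inverse theorem to the closed bijection $(x,\ell)\mapsto T_1x+\ell$ from $(\dom(T_1)/\kernel(T_1))\oplus L'$ onto $\kernel(T_2)$, where $L'\subseteq L$ is chosen with $L'\cap\range(T_1)=\{0\}$ — to conclude that $\range(T_1)$ is closed. Once that is known, $\kernel(Q)\cong\kernel(T_2)/\range(T_1)$ is finite-dimensional and $\range(T_2)=\range(T_2|_{L^\perp})$ is closed, so $(2)$ holds.

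The step I expect to require the most care is verifying that the decomposition $H_2=\overline{\range(T_1)}\oplus\kernel(Q)\oplus\overline{\range(T_2^*)}$ genuinely reduces $\square$ — that $\dom(Q)$, and therefore $\dom(\square)$, splits compatibly and that $Q$ is block-diagonal with respect to it; this hinges on carefully combining the reduction of each closed operator by its kernel with the inclusion $\overline{\range(T_2^*)}\subseteq\kernel(T_1^*)$, in the same spirit as the density argument in the proof of Proposition~\ref{prop:spectral1}. The remaining technical ingredient, the operator-theoretic lemma on finite-codimensional ranges used in $(3)\Rightarrow(2)$, is routine.
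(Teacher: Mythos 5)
Your argument is correct, but it is organized quite differently from the paper's. For $(1)\Leftrightarrow(2)$ the paper works with the multiplication-operator form of the spectral theorem to extract the coercivity estimate $Q(f,f)\ge c\|f\|^2$ on $\dom(Q)\cap\kernel(Q)^\perp$ and then invokes H\"ormander's Theorem~1.1.2 to pass between that estimate and the closed-range conditions (and runs the same two ingredients backwards for the converse); you instead block-diagonalize $\square$ along the weak Hodge decomposition $H_2=\overline{\range(T_1)}\oplus\kernel(Q)\oplus\overline{\range(T_2^*)}$ and read off the equivalence from the union formula for essential spectra of direct sums together with the fact that spectrum below the essential spectrum is discrete. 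Your route makes visible exactly which summand of $\square$ is responsible for each of the three conditions in (2), at the cost of the reduction argument you rightly flag as the delicate step (density of $\dom(T_1^*)\cap\overline{\range(T_1)}$ in $\overline{\range(T_1)}$, compatibility of the splitting with $\dom(Q)$, vanishing of cross terms — all of which do check out, using $\overline{\range(T_2^*)}\subset\kernel(T_1^*)$ and the fact that kernels are contained in domains). Note that your final equivalence ``coercive estimate $\Leftrightarrow$ closed range'' is precisely H\"ormander's Theorem~1.1.2, so the two proofs ultimately rest on the same facts. For $(2)\Leftrightarrow(3)$ the paper introduces the restricted operators $T_1'=P_{L^\perp}\circ T_1$ and $T_2'=T_2|_{L^\perp}$ and applies Proposition~\ref{prop:spectral2} to them, proving $(2)\Rightarrow(3)$ and then $(3)\Rightarrow(1)$ to close the cycle; you close the cycle as $(3)\Rightarrow(2)$ instead, via the elementary lemma that a closed densely defined operator whose range has finite codimension in a closed subspace has closed range. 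Both treatments are sound; the paper's is more uniform with the rest of Section~2 (everything is funneled through Proposition~\ref{prop:spectral2} and H\"ormander's criterion), while yours isolates the purely algebraic content of condition (3) and avoids re-running the spectral contradiction argument a second time.
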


\begin{proof} We first prove (1) implies (2). Suppose $a=\inf\sigma_e (\square)>0$.  If $\inf\sigma(\square)>0$, then $\kernel(Q)$ is trivial and (2) follows from Proposition~\ref{prop:spectral2}.  Suppose $\inf\sigma(\square)=0$. Then $\sigma(\square)\cap [0, a)$ consists only of isolated points, all of which are eigenvalues of finite multiplicity of
$\square$ (cf. Theorem 4.5.2 in \cite{Davies95}).  Hence $\kernel(Q)$, the eigenspace of the eigenvalue $0$, is
finite dimensional.  Choose a sufficiently small $c>0$ so that $\sigma(\square)\cap [0, c)=\{0\}$. By the spectral theorem for self-adjoint operators (cf. Theorem 2.5.1 in \cite{Davies95}), there
exists a finite regular Borel measure $\mu$ on $\sigma(\square)\times\N$ and a unitary transformation $U\colon H_2\to L^2(\sigma(\square)\times \N, d\mu)$ such that $U\square U^{-1}=M_x$, where $M_x \varphi(x, n)=x \varphi(x, n)$ is the multiplication operator by $x$ on
$L^2(\sigma(\square)\times\N, d\mu)$.  Let $P_{\kernel(Q)}$ be the orthogonal projection onto $\kernel(Q)$. For any $f\in\dom(Q)\cap \kernel(Q)^\perp$,
\[
UP_{\kernel(Q)} f= \chi_{[0, c)} Uf =0,
\]
where $\chi_{[0, c)}$ is the characteristic function of $[0, c)$. Hence $Uf$
is supported on $[c, \infty)$.  Therefore,
\[
Q(f, f)=\int_{\sigma(\square)\times\N} x |Uf|^2\, d\mu\ge c\|Uf\|^2=c\|f\|^2.
\]
It then follows from Theorem 1.1.2 in \cite{Hormander65} that both $T_1$ and $T_2$ have
closed range.

To prove (2) implies (1), we use Theorem 1.1.2 in \cite{Hormander65} in
the opposite direction: There exists a positive constant $c$ such that
\begin{equation}\label{eq:h}
c\|f\|^2\le Q(f, f), \quad \text{for all } f\in\dom(Q)\cap\kernel(Q)^\perp.
\end{equation}
Proving by contradiction, we assume $\inf\sigma_e(\square)=0$. Let $\eps$ be
any positive number less than $c$. Since $L_{[0, \eps)}=\range(\chi_{[0, \eps)}(\square))$
is infinite dimensional (cf. Lemma 4.1.4 in \cite{Davies95}), there exists a non-zero $g\in L_{[0, \eps)}$
such that $g\perp \kernel(Q)$.  However,
\[
Q(g, g)=\int_{\sigma(\square)\times\N} x\chi_{[0, \eps)}|Ug|^2 \, d\mu \le \eps \|Ug\|^2=\eps\|g\|^2,
\]
contradicting \eqref{eq:h}.

We do some preparations before proving the equivalence of (3) with (1) and (2).
Let $L$ be any finite dimensional subspace of $\dom(T_1^*)\cap\kernel(T_2)$.
Let $H_2'=H_2\ominus L$. Let
$T_2'=T_2\big|_{H_2'}$ and let ${T_1^*}'=T_1^*\big|_{H_2'}$.  Then
$T_2'\colon H_2'\to H_3$ and ${T_1^*}'\colon H_2'\to H_1$ are
densely defined, closed operators. Let $T_1'\colon H_1\to H_2'$ be
the adjoint of ${T_1^*}'$.  It follows from the definitions that
$\dom(T_1)\subset \dom(T_1')$.  The finite dimensionality of $L$
implies the opposite containment. Thus, $\dom(T_1)=\dom(T_1')$.
For any $f\in\dom(T_1)$ and $g\in\dom({T_1^*}')=\dom(T_1^*)\cap L^\perp$,
\[
\langle T'_1 f, g\rangle =\langle f, {T_1^*}'g\rangle=\langle f, T_1^* g\rangle
=\langle T_1 f, g\rangle.
\]
Hence $T_1'=P_{L^\perp}\circ T_1$ and $\range(T_1')=P_{L^\perp}(\range(T_1))\subset \kernel(T_2')$.  Let
\[
Q'(f, g)=\langle {T_1'}^* f, {T_1'}^*g\rangle+\langle T_2' f, T_2'g\rangle
\]
be the associated sesquilinear form on $H_2'$ with $\dom(Q')=\dom(Q)\cap L^\perp$.

We are now in position to prove that (2) implies (3). In this case, we can take
$L=\kernel(Q)$.  By Theorem 1.1.2 in \cite{Hormander65}, there exists a
positive constant $c$ such that
\[
Q(f, f)=Q'(f, f)\ge c\|f\|^2, \quad \text{for all } f\in\dom(Q').
\]
We then obtain (3) by applying Proposition~\ref{prop:spectral2} to
$T_1'$, $T_2'$, and $Q'(f, g)$.

Finally, we prove (3) implies (1). Applying Proposition~\ref{prop:spectral2} in
the opposite direction, we know that there exists a positive constant $c$ such that
\[
Q(f, f)\ge c\|f\|^2, \quad\text{for all } f\in\dom(Q)\cap L^\perp.
\]
The rest of the proof follows the same lines of the above proof of the implication $(2)\Rightarrow (1)$, with $\kernel(Q)$ there replaced by $L$.
\end{proof}

We now recall the definition of the $\dbar$-Neumann Laplacian
on a complex manifold. Let $X$ be a complex hermitian manifold of dimension $n$.
Let $C^\infty_{(0, q)}(X)=C^\infty(X, \Lambda^{0, q}T^*X)$ be
the space of smooth $(0, q)$-forms on $X$.
Let $\dbar_q\colon C^\infty_{(0, q)}(X)\to C^\infty_{(0, q+1)}(X)$
be the composition of the exterior differential
operator and the projection onto $C^\infty_{(0, q+1)}(X)$.

Let $\Omega$ be a domain in $X$. For $u, v\in C^\infty_{(0, q)}(X)$,
let $\langle u, v\rangle$ be the
point-wise inner product of $u$ and $v$, and let
\[
\langle\langle u, v\rangle\rangle_\Omega=\int_\Omega \langle u,
v\rangle dV
\]
be the inner product of $u$ and $v$ over $\Omega$.  Let $L^2_{(0,
q)}(\Omega)$ be the completion of the space of compactly supported forms in
$C^\infty_{(0, q)}(\Omega)$ with respect to the above inner
product.  The operator $\dbar_q$ has a closed extension on
$L^2_{(0, q)}(\Omega)$. We also denote the closure by $\dbar_q$.
Thus $\dbar_q\colon L^2_{(0, q)}(\Omega)\to L^2_{(0, q+1)}(\Omega)$
is densely defined and closed. Let $\dbarstar_q$ be its adjoint.
For $1\le q\le n-1$, let
\[
Q_q(u, v)=\langle\langle\dbar_q u, \dbar_q
v\rangle\rangle_\Omega+\langle\langle\dbarstar_{q-1} u,
\dbarstar_{q-1} v\rangle\rangle_\Omega
\]
be the sesquilinear form on $L^2_{(0, q)}(\Omega)$ with domain
$\dom(Q_{q})=\dom(\dbar_q)\cap \dom(\dbarstar_{q-1})$.
The self-adjoint operator $\square_{q}$ associated with
$Q_{q}$ is  called {\it the $\dbar$-Neumann Laplacian} on $L^2_{(0, q)}(\Omega)$.  It is an
elliptic operator with non-coercive boundary conditions \cite{KohnNirenberg65}.

The Dolbeault and $L^2$-cohomology groups on $\Omega$
are defined respectively by
\[
H^{0, q}(\Omega)=\frac{\{f\in C^\infty_{(0, q)}(\Omega)\mid
\dbar_q f=0\}}{\{\dbar_{q-1} g \mid g\in C^\infty_{(0, q-1)}(\Omega)\}} \ \text{ and }\
\widetilde H^{0, q}(\Omega)=\frac{\{f\in L^2_{(0,
q)}(\Omega)\mid \dbar_q f=0\}}{\{\dbar_{q-1} g \mid g\in L^2_{(0, q-1)}(\Omega)\}}.
\]
These cohomology groups are in general not isomorphic. For example, when a complex variety
is deleted from $\Omega$, the $L^2$-cohomology group remains the same but the Dolbeault cohomology group could change from trivial to infinite dimensional.
As noted in the paragraph preceding Proposition~\ref{prop:spectral3}, when $\range(\dbar_{q-1})$ is closed in  $L^2_{(0, q)}(\Omega)$, $\widetilde H^{0, q}(\Omega)\cong \kernel(\square_q)$.
We refer the reader to \cite{Demailly} for an extensive treatise on the subject and to \cite{Hormander65} and \cite{Ohsawa82} for results relating these cohomology groups.

\section{Positivity of the spectrum and essential spectrum}\label{sec:finite}

Laufer proved in \cite{Laufer75} that for any open subset of a Stein manifold, if a Dolbeault cohomology group is finite dimensional, then it is trivial. In this section, we establish the
following $L^2$-analogue of this result on a bounded domain in a Stein mainfold:

\begin{theorem}\label{th:finite} Let $\Omega\subset\subset X$ be a domain in a Stein manifold $X$ with $C^1$ boundary. Let $\square_q$, $1\le q\le n-1$, be the $\dbar$-Neumann Laplacian on $L^2_{(0, q)}(\Omega)$. Assume that $\scriptn(\square_q)\subset W^1(\Omega)$. Then $\inf\sigma(\square_q)>0$ if and only if $\inf\sigma_e(\square_q)>0$.
\end{theorem}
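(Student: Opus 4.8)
The plan is to treat the two implications separately. The forward direction is immediate, since $\sigma_e(\square_q)\subseteq\sigma(\square_q)$ implies $\inf\sigma_e(\square_q)\ge\inf\sigma(\square_q)>0$. For the reverse direction the idea is to reduce everything to an $L^2$ analogue of Laufer's vanishing theorem, namely the claim that $\scriptn(\square_q)=\{0\}$. Assuming $\inf\sigma_e(\square_q)>0$, Proposition~\ref{prop:spectral3} gives that $\range(\dbar_{q-1})$ and $\range(\dbar_q)$ are closed and that $\scriptn(\square_q)$ is finite dimensional. Since closedness of $\range(\dbar_{q-1})$ yields $\kernel(\dbar_q)=\range(\dbar_{q-1})\oplus\scriptn(\square_q)$, Proposition~\ref{prop:spectral2} will give $\inf\sigma(\square_q)>0$ as soon as one shows $\scriptn(\square_q)=\{0\}$. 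So the whole content is the vanishing of the harmonic space.

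To prove the vanishing I would argue by contradiction using the Steinness of $X$, in the spirit of Laufer and Siu. Suppose $0\ne u\in\scriptn(\square_q)$. Because $\Omega\subset\subset X$, every $f\in\mathcal{O}(X)$ is bounded on $\overline{\Omega}$, so $fu\in L^2_{(0,q)}(\Omega)$; moreover $\dbar(fu)=f\,\dbar u=0$, and the identity $f\,\dbar v=\dbar(fv)$ shows $f\cdot\range(\dbar_{q-1})\subseteq\range(\dbar_{q-1})$. Hence $f\mapsto[fu]$ makes $\widetilde H^{0,q}(\Omega)\cong\scriptn(\square_q)$ (the range of $\dbar_{q-1}$ being closed) into a finite-dimensional module over $\mathcal{O}(X)$, so the annihilator $\scripti=\{f\in\mathcal{O}(X):fu\in\range(\dbar_{q-1})\}$ is an ideal of finite codimension with $1\notin\scripti$. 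Thus $\mathcal{O}(X)/\scripti$ is a nonzero finite-dimensional, hence Artinian, $\C$-algebra; for $X$ Stein its maximal ideals of finite codimension are the evaluation ideals $\mathfrak{m}_p$, $p\in X$, so after replacing $u$ by a nonzero element annihilated by a maximal ideal one may assume $\scripti=\mathfrak{m}_p$ for a single $p$. One checks $p\in\overline{\Omega}$: otherwise pick a Stein neighbourhood $U\supseteq\overline{\Omega}$ with $p\notin U$; by Cartan's theorem $\mathfrak{m}_p$ generates $\mathcal{O}(U)$, so $1=\sum_i h_if_i$ with $f_i\in\mathfrak{m}_p$, $h_i\in\mathcal{O}(U)$, and writing $f_iu=\dbar v_i$ one gets $u=\sum_i h_i\,(f_iu)=\dbar\bigl(\sum_i h_iv_i\bigr)$, contradicting $[u]\ne0$.

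It then remains to localize at $p\in\overline{\Omega}$ and produce a contradiction. Via a Stein embedding, choose $g_1,\dots,g_n\in\mathcal{O}(X)$ forming holomorphic coordinates centred at $p$, arranged so that their common zero set meets $\overline{\Omega}$ only in $p$; since $g_j\in\mathfrak{m}_p=\scripti$ one has $\dbar v_j=g_ju$ with $v_j\in L^2_{(0,q-1)}(\Omega)$. If $p$ is an interior point, then interior ellipticity of $\square_q$ makes $u$ smooth near $p$, so $u=\dbar\omega$ on a small ball about $p$ by the $\dbar$-Poincaré lemma; off $p$ the $g_j$ have no common zero, so a smooth-cutoff Koszul homotopy built from the $v_j$ patches this local primitive to an $L^2$ primitive of $u$ on $\Omega\setminus\{p\}$, using that $\sum_j|g_j|^2$ is bounded below away from $p$; the point singularity is then removed using the vanishing of $H^{0,q-1}$ of a punctured ball for $2\le q\le n-1$ (the Hartogs phenomenon when $q=1$, as $n\ge2$), which yields $u\in\range(\dbar_{q-1})$, a contradiction. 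The main obstacle is the case $p\in\partial\Omega$: then the common zero set of the $g_j$ misses $\Omega$, so already $u=\dbar(v_1/g_1)$ on all of $\Omega$, and one must only verify $v_1/g_1\in L^2(\Omega)$, that is $\int_\Omega|v_1|^2/|g_1|^2<\infty$. This is exactly where the hypotheses $\partial\Omega\in C^1$ and $\scriptn(\square_q)\subset W^1(\Omega)$ should be used: the $W^1$-regularity of $u$, hence of $g_1u$, combined with the $C^1$ structure of $\partial\Omega$ near $p$ and a local solvability/bootstrap for $\dbar$, should force enough integrability of $v_1$ near $p$ for this weighted integral to converge (a Hardy-type bound in the normal direction at $p$). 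I expect this boundary division estimate to be the technical heart of the argument; the rest is the soft module-theoretic skeleton described above.
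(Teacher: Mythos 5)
Your soft skeleton is the same as the paper's: the forward direction is trivial since $\sigma_e(\square_q)\subset\sigma(\square_q)$, and for the converse Proposition~\ref{prop:spectral3} reduces everything to showing that the finite-dimensional module $\widetilde H^{0,q}(\Omega)\cong\scriptn(\square_q)$ over the holomorphic functions vanishes, i.e.\ that the annihilator ideal $\scripti$ contains $1$. The divergence, and the gap, is in how you kill a common zero $p$ of $\scripti$. The paper's engine is Lemma~\ref{lm:vector}: $\scripti$ is stable under $f\mapsto\xi(f)$ for every holomorphic vector field $\xi$ on $X$, which lets one take an element of $\scripti$ of minimal vanishing order at $p$ and differentiate it into one that does not vanish at $p$; this is precisely where the hypotheses $\partial\Omega\in C^1$ and $\scriptn(\square_q)\subset W^1(\Omega)$ are consumed (via the Friedrichs lemma, to justify $[\xi\lrcorner\partial(fu)]=0$). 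After that, Cayley--Hamilton applied to multiplication by the components of a proper embedding $X\hookrightarrow\C^{2n+1}$ produces elements of $\scripti$ with only finitely many common zeros, each removed by the derivative lemma, and the division theorem $1=\sum h_jf_j$ for functions without common zeros on the Stein manifold $X$ finishes. Your proposal has no substitute for this differentiation step, and the two devices you offer in its place do not work.

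Concretely: (i) in the boundary case the assertion that $u=\dbar(v_1/g_1)$ on all of $\Omega$ is a non sequitur --- only the \emph{common} zero set of $g_1,\dots,g_n$ avoids $\Omega$, while the zero set of $g_1$ alone is a hypersurface through $p$ that in general cuts into $\Omega$; thus $v_1/g_1$ is singular along an interior hypersurface, $\int_\Omega|v_1|^2/|g_1|^2$ diverges for generic $v_1$, and no $W^1$-regularity of $u$ near $p$ controls $v_1$ along $\{g_1=0\}\cap\Omega$. (ii) In the interior case the Koszul homotopy gives $u=\dbar\bigl(\sum_j\chi_jv_j\bigr)-\sum_j\dbar\chi_j\wedge v_j$ with $\chi_j=\bar g_j/\sum_i|g_i|^2$, and absorbing the error term requires solving further $\dbar$-equations (for $g_jv_k-g_kv_j$, and so on down the complex) in degrees $q-1,q-2,\dots$; the hypothesis $\inf\sigma_e(\square_q)>0$ yields closed range only in degree $q$, so the descent cannot be run. (A smaller issue: your exclusion of the case $p\notin\overline\Omega$ needs a Stein neighborhood of $\overline\Omega$ omitting $p$, which fails when $p$ lies in the $\mathcal{O}(X)$-hull of $\overline\Omega$.) The missing idea is Laufer's: at a zero of the ideal, do not divide --- differentiate.
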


The proof of Theorem~\ref{th:finite} follows the same line of arguments as Laufer's. We provide the details below.

Let $H^\infty(\Omega)$ be the space of bounded holomorphic functions on $\Omega$. For any $f\in H^\infty(\Omega)$, let $M_f$ be the multiplication operator by $f$:
\[
M_f\colon L^2_{(0, q)}(\Omega)\to L^2_{(0, q)}(\Omega), \qquad M_f(u)=fu.
\]
Then $M_f$ induces an endomorphism on $\widetilde H^{0, q}(\Omega)$.  Let $\scripti$ be
set of all holomorphic functions $f\in H^\infty(\Omega)$ such that $M_f=0$ on $\widetilde H^{0, q}(\Omega)$.  Evidently, $\scripti$ is an ideal of $H^\infty(\Omega)$. Assume $\inf\sigma_e(\square_q)>0$. To show that $\widetilde H^{0, q}(\Omega)$ is trivial, it suffices to show that $1\in\scripti$.

\begin{lemma}\label{lm:vector} Let $\xi$ be a holomorphic vector field on $X$ and let $f\in\scripti$. Then
$\xi(f)\in \scripti$.
\end{lemma}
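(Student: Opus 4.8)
The plan is to exploit the ideal structure of $\scripti$ together with the fact that a holomorphic vector field on a Stein manifold can be ``integrated'' in a purely algebraic fashion against the $\dbar$-operator. The key identity I would use is the Cartan-type formula relating the Lie derivative $\mathcal{L}_\xi = \xi \intprod \dbar + \dbar(\xi \intprod \cdot)$ acting on $(0,q)$-forms: for a holomorphic vector field $\xi$ (of type $(1,0)$), contraction $\xi \intprod u$ with a $(0,q)$-form $u$ lowers the degree, and because $\xi$ is holomorphic the operator $\xi \intprod \dbar + \dbar (\xi \intprod \cdot)$ commutes with $\dbar$ in the appropriate sense. The upshot I want is: if $u \in L^2_{(0,q)}(\Omega)$ is $\dbar$-closed and represents a class $[u] \in \widetilde H^{0,q}(\Omega)$, then for $f \in \scripti$ the class $[\,\xi(f)\, u\,]$ can be written in terms of $[f u] = 0$ plus a $\dbar$-exact correction coming from the contraction term.

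Concretely, first I would record that for $f$ holomorphic and $\xi$ a holomorphic vector field, $\xi(f)$ is again holomorphic and bounded on relatively compact subsets; since $\Omega \subset\subset X$ and we may shrink slightly or use that $f \in H^\infty(\Omega)$ combined with interior estimates, $\xi(f) \in H^\infty(\Omega)$, so the multiplication operator $M_{\xi(f)}$ is well-defined on $\widetilde H^{0,q}(\Omega)$. Second, given a $\dbar$-closed $u \in L^2_{(0,q)}(\Omega)$, I would use $f \in \scripti$ to write $f u = \dbar v$ for some $v \in L^2_{(0,q-1)}(\Omega)$ (here I am using that $M_f = 0$ on the $L^2$-cohomology, i.e. the class of $fu$ vanishes; one must be mild about whether $\range(\dbar_{q-1})$ is closed, but under $\inf\sigma_e(\square_q) > 0$ Proposition~\ref{prop:spectral3} gives closed range, so this is fine). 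Third, apply the contraction/Lie-derivative identity: $\xi(f) u$ should equal, up to sign, $\dbar(\xi \intprod (fu)) \pm \xi \intprod \dbar(fu)$ modified by the Leibniz rule $\xi(f) u = \xi \intprod (\dbar f \wedge u) = \xi \intprod \dbar(f u) - f (\xi \intprod \dbar u) + \ldots$ — the precise bookkeeping is that, since $\dbar u = 0$ and $f$ is holomorphic so $\dbar f = 0$ as a $(0,1)$-form but $\xi(f) = \langle \xi, \partial f\rangle$ is the $(1,0)$-derivative — so in fact the cleanest route is: $\dbar(\xi \intprod v) = \xi \intprod \dbar v \pm \mathcal{L}_\xi^{(0,\bullet)} v$ type identity is not quite what is needed; rather one observes directly that $\xi(f)\,u = \xi \intprod \bigl(\partial f \wedge u\bigr)$ and manipulates $fu = \dbar v$ to extract a $\dbar$-exact expression for $\xi(f) u$.

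The main obstacle I anticipate is pinning down the exact algebraic identity that expresses $\xi(f) u$ as a $\dbar$-coboundary given that $fu$ is one: the naive Leibniz computation produces terms involving $\xi \intprod \dbar v$ and $\dbar(\xi \intprod v)$, and one must check that the ``error'' terms either vanish (using holomorphicity of $\xi$ and of $f$, and $\dbar u = 0$) or are themselves manifestly $\dbar$-exact. A clean way to organize this: since $f$ is holomorphic, $f \dbar v = \dbar(f v)$, so $\dbar(fv - \text{something})$; and since $\xi$ is holomorphic, $\dbar(\xi \intprod w) = - \xi \intprod \dbar w$ for any form $w$ (because the Lie derivative $\mathcal{L}_\xi$ preserves bidegree type and annihilates the relevant pieces — this is the identity I would isolate and prove as the technical core). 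Granting that identity with $w = fv$, one gets $\dbar(\xi \intprod (fv)) = -\xi \intprod \dbar(fv) = -\xi \intprod (f \dbar v) = -\xi \intprod (f \cdot f u)$ — which is not yet $\xi(f) u$, so one more differentiation step against $v$ rather than $fv$ is needed, using $fu = \dbar v$ to land on $\xi(f) u = \pm \dbar(\xi \intprod (f v)) \pm f^2(\text{exact})$ modulo lower-order exact terms; iterating or bootstrapping then shows $[\xi(f) u] = 0$. I would present the argument by first stating and proving the contraction identity $\dbar \circ (\xi \intprod \cdot) + (\xi \intprod \cdot)\circ \dbar = 0$ on $(0,\bullet)$-forms for holomorphic $\xi$ as an auxiliary lemma, then deduce $M_{\xi(f)} = 0$ on $\widetilde H^{0,q}(\Omega)$ by a direct one-line computation, concluding $\xi(f) \in \scripti$.
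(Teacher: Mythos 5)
There is a genuine gap, and it sits exactly where you sensed trouble: the operator you build the argument around is the wrong one. For a holomorphic (hence type $(1,0)$) vector field $\xi$, the plain contraction $\xi\lrcorner u$ of a $(0,q)$-form $u$ is identically zero, because $u$ involves only the $d\bar z_j$'s. Consequently your proposed key identity $\dbar\circ(\xi\lrcorner\,\cdot\,)+(\xi\lrcorner\,\cdot\,)\circ\dbar=0$ on $(0,\bullet)$-forms reads $0+0=0$: true but vacuous, and no ``direct one-line computation'' can extract $[\xi(f)u]=0$ from it. This is why your Leibniz bookkeeping never closes --- you end with $\xi\lrcorner(f\cdot fu)$, which is again $0$ rather than $\xi(f)u$, and the ``iterating or bootstrapping'' step has nothing to act on. The operator that makes the argument work is $D=\xi\lrcorner\partial$ (first apply the $(1,0)$-differential, producing a $(1,q)$-form, then contract), which on coefficients is simply differentiation along $\xi$. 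Because $\xi$ is holomorphic, $D$ commutes with $\dbar$, and it obeys the Leibniz rule $D(fu)=\xi(f)\,u+fD(u)$. Taking $u$ to be a harmonic representative of a class, one gets $[\xi(f)u]=[D(fu)]-[fD(u)]$; the second class vanishes because $f\in\scripti$ kills $[Du]$, and the first vanishes because $fu=\dbar v$ is exact and $D(\dbar v)=\dbar(Dv)$ is again exact. Your observation that $\xi(f)u=\xi\lrcorner(\partial f\wedge u)$ is correct and is essentially this Leibniz identity in disguise, but you never isolate $D$ itself, so the proof does not close.

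A second, smaller omission: even with the correct operator, $D$ is a first-order differential operator applied to forms that are a priori only in $L^2$. The hypothesis $\scriptn(\square_q)\subset W^1(\Omega)$ is what allows $D$ to act on the harmonic representative $u$, and justifying $D(\dbar v)=\dbar(Dv)$ with $Dv\in\dom(\dbar)$ for the $L^2$ solution $v$ of $\dbar v=fu$ requires an approximation argument (the domain is locally starlike near its $C^1$ boundary, and one invokes a partition of unity together with the Friedrichs lemma). Your proposal does not engage with this regularity issue at all; a complete proof must.
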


\begin{proof} Let $D=\xi\lrcorner \partial\colon C^\infty_{(0, q)}(\Omega)\to
C^\infty_{(0, q)}(\Omega)$, where $\lrcorner$ denotes the contraction operator.
It is easy to check that $D$ commute with the
$\dbar$ operator. Therefore, $D$ induces an endomorphism on $\widetilde H^{0, q}(\Omega)$.
(Recall that under
the assumption, $\widetilde H^{0, q}(\Omega)\cong \kernel(\square_q)\subset W^1(\Omega)$.) For any $u\in \kernel(\square_q)$,
\[
D(fu)-fD(u)=\xi\lrcorner\partial(fu)-f\xi\lrcorner\partial u=\xi(f) u.
\]
Notice that $\Omega$ is locally starlike near the boundary. Using partition of unity and the Friedrichs Lemma, we obtain $[D(fu)]=0$. Therefore, $\left[\xi(f) u\right]=
\left[D(fu)\right]-\left[fD(u)\right]=\left[0\right]$. \end{proof}

We now return to the proof of the theorem. Let $F=(f_1, \ldots, f_{n+1})\colon X\to \C^{2n+1}$ be a proper embedding of $X$ into $\C^{2n+1}$ (cf.~Theorem 5.3.9 in \cite{Hormander91}). Since $\Omega$ is relatively compact in $X$, $f_j\in H^\infty(\Omega)$. For any $f_j$,  let $P_j(\lambda)$ be the characteristic polynomial of $M_{f_j}\colon \widetilde H^{0, q}(\Omega)\to \widetilde H^{0, q}(\Omega)$. By the Cayley-Hamilton theorem, $P_j(M_{f_j})=0$ (cf.~Theorem~2.4.2 in \cite{HornJohnson}). Thus $P_j(f_j)\in \scripti$.

The number of points in the set $\{(\lambda_1, \lambda_2, \ldots, \lambda_{2n+1})\in\C^{2n+1} \mid P_j(\lambda_j)=0, 1\le j\le 2n+1 \}$ is finite.  Since $F\colon X\to \C^{2n+1}$ is one-to-one, the number of common zeroes of $P_j(f_j(z))$, $1\le j\le 2n+1$, on $X$ is also finite. Denote these zeroes by $z^k$, $1\le k\le N$. For each  $z^k$, let $g_k$ be a function in $\scripti$ whose vanishing order at $z^k$ is minimal. (Since $P_j(f_j)\in\scripti$, $g_k\not\equiv0$.) We claim that $g_k(z^k)\not=0$.  Suppose otherwise $g_k(z^k)=0$.  Since there exists a holomorphic vector field $\xi$ on $X$ with any prescribed holomorphic tangent vector at any given point (cf.~Corollary 5.6.3 in \cite{Hormander91}), one can find an appropriate choice of
$\xi$ so that $\xi(g_j)$ vanishes to lower order at $z^k$. According to Lemma~\ref{lm:vector}, $\xi(g_j)\in\scripti$. We thus arrive at a contradiction.

Now we know that there are holomorphic functions, $P_j(f_j)$, $1\le j\le 2n+1$, and  $g_k$, $1\le k\le N$, that have no common zeroes on $X$. It then follows that there exist holomorphic functions
$h_j$ on $X$ such that
\[
\sum P_j(f_j) h_j +\sum g_k h_k =1.
\]
(See, for example, Corollary 16 on p.~244 in \cite{GunningRossi65}, Theorem 7.2.9 in \cite{Hormander91}, and Theorem~7.2.5 in \cite{Krantz01}. Compare also Theorem 2 in \cite{Skoda72}.) Since $P_j(f_j)\in\scripti, g_k\in \scripti$, and $h_j\in H^\infty(\Omega)$, we have $1\in \scripti$.  We thus conclude the proof of Theorem~\ref{th:finite}.

\begin{remark}
(1) Unlike the above-mentioned result of Laufer on the Dolbeault cohomology groups \cite{Laufer75}, Theorem~\ref{th:finite} is not expected to hold if the boundedness condition on $\Omega$ is removed (compare \cite{Wiegerinck84}). It would be interesting to know whether
Theorem~\ref{th:finite} remains true if the assumption $\scriptn(\square_q)\subset W^1(\Omega)$ is dropped and whether it remains true for unbounded pseudoconvex domains.

(2) Notice that in the above proof, we use the fact that $\range(\dbar_{q-1})$ is closed, as a consequence of the assumption $\inf\sigma_e(\square_q)>0$ by Proposition~\ref{prop:spectral3}. It is well known that for any infinite dimensional Hilbert space $H$, there exists a subspace $R$ of $H$ such that $H/R$ is finite dimensional but $R$ is not closed. However, the construction of such a subspace usually involves Zorn's lemma (equivalently, the axiom of choice). It would be of interest to know whether there exists a domain $\Omega$ in a Stein
 manifold such that $\widetilde H^{0, q}(\Omega)$ is finite dimensional but $\range(\dbar_{q-1})$ is not closed.

 (3) We refer the reader to \cite{Shaw09} for related results on the relationship between triviality and finite dimensionality of the $L^2$-cohomology gourps using the $\dbar$-Cauchy problem. We also refer the reader to \cite{Brinkschulte02} for a related result on embedded $CR$ manifolds.
\end{remark}

\section{Hearing pseudoconvexity}\label{sec:psc}

The following theorem illustrates that one can easily determine pseudoconvexity from
the spectrum of the $\dbar$-Neumann Laplacian.

\begin{theorem}\label{th:psc} Let $\Omega$ be a bounded domain in $\C^n$ such that
$\interior(\cl(\Omega))=\Omega$. Then the following statements are equivalent:
\begin{enumerate}
\item $\Omega$ is pseudoconvex.
\item $\inf\sigma(\square_q)>0$, for all $1\le q\le n-1$.
\item $\inf\sigma_e(\square_q)>0$, for all $1\le q\le n-1$.
\end{enumerate}
\end{theorem}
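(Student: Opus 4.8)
The plan is to prove the cyclic chain of implications $(1)\Rightarrow(2)\Rightarrow(3)\Rightarrow(1)$, drawing on the spectral-theoretic machinery assembled in Section~\ref{sec:pre} together with Theorem~\ref{th:finite}.

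For $(1)\Rightarrow(2)$, I would invoke H\"ormander's $L^2$-existence theorem. If $\Omega$ is bounded and pseudoconvex, then by H\"ormander's weighted estimates (with the weight $|z|^2$, or simply using that $\Omega$ sits inside a large ball) one obtains, for each $1\le q\le n-1$, a uniform estimate $\|u\|^2\le C(\|\dbar_q u\|^2+\|\dbarstar_{q-1}u\|^2)$ for all $u\in\dom(Q_q)$; equivalently $Q_q(u,u)\ge c\|u\|^2$ on all of $\dom(Q_q)$. By Theorem~4.3.1 of \cite{Davies95} (as already used in the proof of Proposition~\ref{prop:spectral2}) this gives $\inf\sigma(\square_q)\ge c>0$. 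Here $\kernel(Q_q)$ is automatically trivial, consistent with the vanishing of $\widetilde H^{0,q}(\Omega)$. The implication $(2)\Rightarrow(3)$ is immediate and formal: $\sigma_e(\square_q)\subset\sigma(\square_q)$, so $\inf\sigma_e(\square_q)\ge\inf\sigma(\square_q)>0$.

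The substantive direction is $(3)\Rightarrow(1)$. Assume $\inf\sigma_e(\square_q)>0$ for all $1\le q\le n-1$. The strategy is to show $\widetilde H^{0,q}(\Omega)$ vanishes for every such $q$, and then to deduce pseudoconvexity from the hypothesis $\interior(\cl(\Omega))=\Omega$ via the converse to H\"ormander's theorem alluded to in the introduction (the ``sheaf-theoretic arguments modified'' reference, cf.~\cite{Serre53,Laufer66,Siu67,Broecker83,Ohsawa88}). To get the vanishing, I would like to apply Theorem~\ref{th:finite}, which requires the regularity hypothesis $\scriptn(\square_q)\subset W^1(\Omega)$ — but a bounded domain with merely $\interior(\cl(\Omega))=\Omega$ need not have $C^1$ boundary, so Theorem~\ref{th:finite} does not apply directly. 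The natural fix is an exhaustion argument: write $\Omega$ as an increasing union of smooth subdomains, or rather observe that by Proposition~\ref{prop:spectral3}, condition (3) is equivalent to $\range(\dbar_{q-1})$ and $\range(\dbar_q)$ being closed with $\kernel(\square_q)$ finite dimensional; once the ranges are closed, $\widetilde H^{0,q}(\Omega)\cong\kernel(\square_q)$ is finite dimensional, and then one runs the Laufer-type argument of Section~\ref{sec:finite} directly — the embedding $F\colon\C^n\hookrightarrow\C^n$ is trivial here ($X=\C^n$ is already Stein and embedded), the functions $f_j=z_j$ are bounded on $\Omega$, and the Cayley--Hamilton plus finitely-many-common-zeros plus Cartan division argument yields $1\in\scripti$, hence $\widetilde H^{0,q}(\Omega)=0$. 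One must check that Lemma~\ref{lm:vector}'s use of ``locally starlike near the boundary'' and the Friedrichs lemma still goes through; since on $\C^n$ the vector fields $\xi=\partial/\partial z_k$ are constant-coefficient, the commutation $D(fu)-fD(u)=\xi(f)u$ with $D=\xi\lrcorner\partial$ is elementary and the regularization step needs only that $\Omega$ is an arbitrary bounded open set, which is fine. Having established $\widetilde H^{0,q}(\Omega)=0$ for all $1\le q\le n-1$, I would cite the converse of H\"ormander's theorem (valid precisely because $\interior(\cl(\Omega))=\Omega$) to conclude $\Omega$ is pseudoconvex.

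The main obstacle I anticipate is the mismatch between the hypotheses of Theorem~\ref{th:finite} (smooth boundary, Sobolev regularity of harmonic forms) and the weaker hypothesis $\interior(\cl(\Omega))=\Omega$ in Theorem~\ref{th:psc}; the resolution is to bypass Theorem~\ref{th:finite} as a black box and instead re-run its proof in the special setting $X=\C^n$, where the ambient Stein structure is maximally concrete, noting that the only place smoothness of $\partial\Omega$ entered was to guarantee $\scriptn(\square_q)\subset W^1(\Omega)$, and that for the Laufer argument what is actually needed is just closedness of $\range(\dbar_{q-1})$ (to identify $\widetilde H^{0,q}$ with $\kernel(\square_q)$) plus its finite dimensionality — both supplied by Proposition~\ref{prop:spectral3} under assumption (3) — together with the Friedrichs regularization, which is insensitive to boundary regularity. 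The secondary point requiring care is citing the correct form of the converse to H\"ormander's theorem; I would state it as: a bounded domain $\Omega\subset\C^n$ with $\interior(\cl(\Omega))=\Omega$ for which $\widetilde H^{0,q}(\Omega)=0$ for all $q\ge1$ is pseudoconvex, with references as in the introduction.
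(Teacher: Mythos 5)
Your directions $(1)\Rightarrow(2)$ and $(2)\Rightarrow(3)$ are fine and match the paper. The gap is in $(3)\Rightarrow(1)$, specifically in the step where you pass from \emph{finite-dimensional} to \emph{trivial} $\widetilde H^{0,q}(\Omega)$ by ``re-running'' the proof of Theorem~\ref{th:finite}. That proof is not insensitive to boundary regularity: Lemma~\ref{lm:vector} applies the first-order operator $D=\xi\lrcorner\partial$ to harmonic representatives $u\in\kernel(\square_q)$, and for this one needs (i) $\partial u/\partial z_k\in L^2(\Omega)$ up to the boundary, which is exactly the hypothesis $\scriptn(\square_q)\subset W^1(\Omega)$ (interior ellipticity gives smoothness inside $\Omega$ but no $L^2$ control of first derivatives at the boundary), and (ii) the fact that $D$ descends to the $L^2$-cohomology, i.e.\ maps $\range(\dbar_{q-1})$ into itself, which is where the ``locally starlike near the boundary'' property of a $C^1$ boundary and the Friedrichs lemma are used. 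Choosing $\xi$ with constant coefficients removes neither difficulty: the issue is not the commutation identity $D(fu)-fD(u)=\xi(f)u$ but the membership of $Du$ and $Dv$ in $L^2(\Omega)$ and the exactness $[D(fu)]=0$. A bounded domain with only $\interior(\cl(\Omega))=\Omega$ can have a highly irregular boundary for which neither (i) nor (ii) is available, so your reduction does not go through as stated.

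The paper avoids this entirely: it proves $(2)\Rightarrow(1)$ by an explicit construction (the forms $u(k_1,\dots,k_q)$ with denominator $r_m=\sum|z_j|^{2m}$, an inductive solution of $\dbar$-equations, and a restriction-to-$\{z'=0\}$ argument made legitimate by taking $m>2(n-1)$), and then handles $(3)\Rightarrow(1)$ by modifying that same construction — inserting an extra factor $\bar z_n^{m\alpha}/r_m^{\alpha}$ and letting $\alpha$ range over $0,\dots,N$, so that finite dimensionality of the cohomology produces a nontrivial exact linear combination — rather than by first upgrading finite-dimensionality to vanishing. If you want to salvage your outline, you should either adopt that modification or find an argument for ``finite-dimensional $\Rightarrow$ trivial'' that does not require $W^1$ regularity of harmonic forms. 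A secondary, presentational point: you cite the converse of H\"ormander's theorem (triviality of all $\widetilde H^{0,q}(\Omega)$ plus $\interior(\cl(\Omega))=\Omega$ implies pseudoconvexity) as a black box; that statement is in the literature, so this is not a logical error, but it is precisely the content the paper's proof establishes from scratch, so as written your argument defers the main substance of the theorem to a citation.
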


The implication $(1)\Rightarrow (2)$ is a consequence of H\"{o}rmander's fundamental
$L^2$-estimates of the $\dbar$-operator \cite{Hormander65}, in light of Proposition~\ref{prop:spectral2}, and it holds without the assumption $\interior(\cl(\Omega))=\Omega$. The implications $(2)\Rightarrow (1)$ and $(3)\Rightarrow (1)$ are consequences of the sheaf cohomology theory dated back to Oka and Cartan (cf.~\cite{Serre53, Laufer66, Siu67, Broecker83, Ohsawa88}).  A elementary proof of (2) implying (1), as explained in \cite{Fu05}, is given below. The proof uses sheaf cohomology arguments in \cite{Laufer66}. When adapting Laufer's method to study the $L^2$-cohomology groups, one encounters a difficulty: While the restriction to the complex hyperplane of the smooth function resulting from the sheaf cohomology arguements for the Dolbeault cohomology groups is well-defined, the restriction of the corresponding $L^2$ function is not. This difficulty
was overcome in \cite{Fu05} by appropriately modifying the construction of auxiliary $(0, q)$-forms (see the remark after the proof for more elaborations on this point).

We now show that (2) implies (1). Proving by contradiction, we assume that $\Omega$ is not pseudoconvex.
Then there exists a domain $\widetilde\Omega\supsetneqq\Omega$ such that every holomorphic
function on $\Omega$ extends to $\widetilde\Omega$. Since $\interior(\cl(\Omega))=\Omega$,
$\widetilde\Omega\setminus\cl(\Omega)$ is non-empty. After a
translation and a unitary transformation, we may assume that the
origin is in $\widetilde\Omega\setminus\cl(\Omega)$ and there is
a point $z^0$ in the intersection of
$z_n$-plane with $\Omega$ that is in the same connected component
of the intersection of the $z_n$-plane with $\widetilde\Omega$.

Let $m$ be a positive integer (to be specified later). Let $k_q=n$. For any
$\{k_1, \ldots, k_{q-1}\}\subset \{1, 2, \ldots, n-1\}$, we define
\begin{equation}\label{eq:rm}
u(k_1, \ldots, k_q)=\frac{(q-1)!
(\bar z_{k_1}\cdots\bar z_{k_q})^{m-1}}{ r_m^q}\sum_{j=1}^q (-1)^j \bar{z}_{k_j} d\bar{z}_{k_1}\wedge
\ldots\wedge\widehat{d\bar{z}_{k_j}}\wedge\ldots\wedge
d\bar{z}_{k_q},
\end{equation}
where $r_m=|z_1|^{2m}+\ldots +|z_n|^{2m}$. As usual,
$\widehat{d\bar{z}_{k_j}}$ indicates the deletion of
$d\bar z_{k_j}$ from the wedge product. Evidently,
$u(k_1, \ldots, k_q)\in L^2_{(0, q-1)}(\Omega)$ is a smooth form on
$\C^n\setminus\{ 0\}$.
Moreover, $u(k_1, \ldots, k_q)$ is skew-symmetric with respect to the
indices $(k_1, \ldots, k_{q-1})$.  In particular, $u(k_1, \ldots, k_q)=0$ when two $k_j$'s are identical.

We now fix some notional conventions. Let $K=(k_1,\ldots, k_q)$ and
$J$ a collection of indices from $\{k_1, \ldots, k_q\}$.  Write $d\bar z_K=d\bar z_{k_1}\wedge \ldots\wedge
d\bar z_{k_q}$, $\bar z_K^{m-1}= (\bar z_{k_1}\cdots\bar
z_{k_q})^{m-1}$, and $\widetilde{d\bar
z_{k_j}}=d\bar{z}_{k_1}\wedge
\ldots\wedge\widehat{d\bar{z}_{k_j}}\wedge\ldots\wedge
d\bar{z}_{k_q}$.  Denote by
$(k_1, \ldots, k_q \mid J)$ the tuple of remaining indices after deleting those
in $J$ from $(k_1, \ldots, k_q)$. For example, $(2, 5, 3, 1 \mid (4, 1, 6 \mid 4, 6))=(2, 5, 3)$.

It follows from a straightforward calculation that
\begin{align}
\dbar u(k_1, \ldots, k_q)&=-\frac{q! m
\bar{z}_K^{m-1}} {r_m^{q+1}}\big(r_m d\bar z_K
+ \big(\sum_{\ell =1}^n \bar z_\ell^{m-1} z_\ell^m d\bar
z_\ell\big)\wedge \big(\sum_{j=1}^q (-1)^j \bar
z_{k_j}\widetilde{d\bar z_{k_j}}\big)\big)\notag\\
&=-\frac{q! m\bar
z_K^{m-1}}{r_m^{q+1}} \sum_{\begin{subarray}{c}\ell=1 \\
\ell\not=k_1, \ldots, k_q\end{subarray}}^n z^m_\ell \bar
z_\ell^{m-1}\big(\bar z_\ell d\bar z_K+d\bar
z_\ell\wedge\sum_{j=1}^q(-1)^j \bar z_{k_j}
\widetilde{d\bar z_{k_j}}\big)\notag\\
&=m\sum_{\ell=1}^{n-1} z^m_\ell u(\ell, k_1, \ldots,
k_q).\label{eq:um}
\end{align}
It follows that $u(1, \ldots, n)$ is a $\dbar$-closed $(0, n-1)$-form.

By Proposition~\ref{prop:spectral2}, we have $\range(\dbar_{q-1})=\kernel(\dbar_q)$
for all $1\le q\le n-1$. We now solve
the $\dbar$-equations inductively, using $u(1, \ldots, n)$ as initial
data. Let $v\in L^2_{(0, n-2)}(\Omega)$ be a solution to
$\dbar v=u (1, \ldots, n)$.  For any $k_1\in \{1, \ldots, n-1\}$, define
\[
w(k_1)=-mz^m_{k_1} v+(-1)^{1+k_1} u(1, \ldots, n \mid k_1).
\]
Then it follows from \eqref{eq:um} that $\dbar w(k_1)=0$.  Let
$v(k_1)\in L^2_{(0, n-3)}(\Omega)$ be a solution of $\dbar v(k_1)=w(k_1)$.

Suppose for any $(q-1)$-tuple $K'=(k_1, \ldots, k_{q-1})$ of integers from $\{1, \ldots, n-1\}$, $q\ge 2$, we have constructed $v(K')\in L^2_{(0, n-q-1)}(\Omega)$ such that it is skew-symmetric with respect to the indices and satisfies
\begin{equation}\label{eq:v}
\dbar v(K')=m\sum_{j=1}^{q-1} (-1)^j z^m_{k_j} v(K' \mid
{k_j}) + (-1)^{q+|K'|} u(1, \ldots, n \mid {K'})
\end{equation}
where $|K'|=k_1+ \ldots +k_{q-1}$ as usual.  We now construct a $(0, n-q-2)$-forms $v(K)$
satisfying \eqref{eq:v} for any $q$-tuple $K=(k_1, \ldots, k_q)$ of integers from $\{1, \ldots, n-1\}$ (with $K'$ replaced by $K$).  Let
\[
w(K)=m\sum_{j=1}^q (-1)^j z^m_{k_j} v(K \mid {k_j})
+(-1)^{q+|K|} u(1, \ldots, n \mid  K) .
\]
Then it follows from \eqref{eq:um} that
\begin{align*}
\dbar w (K)&=m\sum_{j=1}^q (-1)^j z^m_{k_j}\dbar v(K \mid
{k_j})
+(-1)^{q+|K|} \dbar u(1, \ldots, n \mid {K})\\
&=m\sum_{j=1}^q (-1)^j z^m_{k_j} \Big( m\sum_{1\le i <j} (-1)^i
z^m_{k_i} v(K \mid  k_j,  k_i) +m\sum_{j<i\le q} (-1)^{i-1}
z^m_{k_i}  v(K \mid  k_j,
 k_i) \\
&\qquad -(-1)^{q+|K|-k_j} u(1, \ldots, n \mid {(K \mid  k_j)})
\Big) +(-1)^{q +|K|} \dbar u(1, \ldots, n \mid  K) \\
&=(-1)^{q+|K|}\big( -m\sum_{j=1}^q (-1)^{j-k_j} z^m_{k_j} u(1,
\ldots, n \mid {(K \mid  k_j)}) +\dbar
u(1, \ldots, n \mid  K)\big) \\
&=(-1)^{q+|K|}\big( -m\sum_{j=1}^q z^m_{k_j} u(k_j, (1, \ldots, n \mid
 K)) + \dbar u(1, \ldots, n \mid  K)\big)=0
\end{align*}
Therefore, by the hypothesis, there exists a $v(K)\in L^2_{(0, n-q-2)}(\Omega)$ such that
$\dbar v(K)=w(K)$. Since $w(K)$ is skew-symmetric with respect to indices $K$, we
may also choose a likewise $v(K)$. This then concludes the inductive step.

Now let
\[
F=w(1, \ldots, n-1)= m\sum_{j=1}^{n-1} z^m_j v(1, \ldots,
j, \ldots, n-1)- (-1)^{n+\frac{n(n-1)}2} u(n),
\]
where $u(n)=-\bar{z}_n^m/r_m$, as given by \eqref{eq:rm}.
Then $F(z)\in L^2(\Omega)$ and $\dbar F(z) =0$.  By the hypothesis, $F(z)$ has a
holomorphic extension to $\widetilde{\Omega}$.  We now restrict $F(z)$ to
the coordinate hyperplane $z'=(z_1, \ldots, z_{n-1})=0$. Notice that
so far we only choose the $v(K)$'s and $w(K)$'s from  $L^2$-spaces. The
restriction to the coordinate hyperplane $z'=0$ is not well-defined.
To overcome this difficulty, we choose $m>2(n-1)$. For sufficiently
small $\e>0$ and $\delta>0$,
\begin{align*}
&\left\{\int_{\{|z'|<\e\}\cap\Omega}
\big|\big(F+(-1)^{n+\frac{n(n-1)}2}u(n)\big)(\delta z', z_n))\big|^2dV(z)\right\}^{1/2}\\
&\qquad\le m \delta^m\e^m \sum_{j=1}^{n-1}\left\{\int_{\{|z'|<\e\}\cap\Omega}
|v(1, \ldots, \hat j, \ldots, n-1)(\delta z', z_n)|^2
dV(z)\right\}^{1/2} \\
&\qquad\le m\delta^{m-2(n-1)}\e^m\sum_{j=1}^{n-1} \|v(1, \ldots,
\hat j, \ldots, n-1)\|_{L^2(\Omega)}.
\end{align*}
Letting $\delta\to 0$, we then obtain
\[
F(0, z_n)=-(-1)^{n+\frac{n(n-1)}2} u(n)(0, z_n)=(-1)^{n+\frac{n(n-1)}2}z_n^{-m}.
\]
for $z_n$ near $z^0_n$.  (Recall that $z^0\in\Omega$ is in the same
connected component of $\{z'=0\}\cap\widetilde\Omega$ as the origin.)
This contradicts the analyticity of $F$ near the origin. We
therefore conclude the proof of Theorem~\ref{th:psc}.

\begin{remark} (1) The above proof of the implication $(2)\Rightarrow (1)$ uses only the fact that
the $L^2$-cohomology groups $\widetilde H^{0, q}(\Omega)$ are trivial for all $1\le q\le n-1$. Under the (possibly) stronger assumption $\inf\sigma(\square_q)>0$, $1\le q\le n-1$, the difficulty regarding the restriction of the $L^2$ function to the complex hyperplane in the proof becomes superficial. In this case, the $\dbar$-Neumannn Laplacian $\square_q$ has a bounded inverse. The interior ellipticity of the $\dbar$-complex implies that one can in fact choose
the forms $v(K)$ and $w(K)$ to be smooth inside $\Omega$, using the canonical solution operator to the $\dbar$-equation. Therefore, in this case, the restriction to $\{z'=0\}\cap\Omega$ is well-defined.  Hence one can choose $m=1$. This was indeed the choice in \cite{Laufer66}, where the forms involved are smooth and the restriction posts no problem.  It is interesting to note that by having the freedom to choose $m$ sufficiently large, one can leave out the use of interior ellipticity. Also, the freedom to choose $m$ becomes
crucial when one proves an analogue of Theorem~\ref{th:psc} for the Kohn Laplacian because the $\dbar_b$-complex is no longer elliptic. The construction of $u(k_1, \ldots, k_q)$ in
\eqref{eq:rm} with the exponent $m$ was introduced in \cite{Fu05} to handle this
difficulty.

(2) One can similarly give a proof of the implication $(3)\Rightarrow (1)$. Indeed, the above proof can be easily modified to show that the finite dimensionality of $\widetilde H^{0, q}(\Omega)$, $1\le q\le n-1$, implies the pseudoconvexity of $\Omega$. In this case, the $u(K)$'s are defined by
\[
u(k_1, \ldots, k_q)=\frac{(\alpha+q-1)! \bar
z_n^{m\alpha}(\bar z_{k_1}\cdots\bar z_{k_q})^{m-1}}{ r_m^{\alpha+
q}}\sum_{j=1}^q (-1)^j \bar{z}_{k_j} d\bar{z}_{k_1}\wedge
\ldots\wedge\widehat{d\bar{z}_{k_j}}\wedge\ldots\wedge
d\bar{z}_{k_q},
\]
where $\alpha$ is any non-negative integers. One now
fixes a choice of $m>2(n-1)$ and let $\alpha$ runs from $0$ to $N$ for a sufficiently
large $N$, depending on the dimensions of the $L^2$-cohomology groups. We refer the reader to \cite{Fu05} for details.

(3) As noted in Sections~\ref{sec:pre} and \ref{sec:finite}, unlike the Dolbeault cohomology case, one cannot remove the assumption $\interior(\cl(\Omega))=\Omega$ or the boundedness
condition on $\Omega$ from Theorem~\ref{th:psc}.  For example, a
bounded pseudoconvex domain in $\C^n$ with a complex analytic variety removed still satisfies condition (2) in Theorem~\ref{th:finite}.

(4) As in \cite{Laufer66}, Theorem~\ref{th:psc} remains true for a Stein manifold. More generally, as a consequence of Andreotti-Grauert's theory \cite{AndreottiGrauert62}, the $q$-convexity of a bounded domain $\Omega$ in a Stein manifold such that $\interior(\cl(\Omega))=\Omega$ is characterized by $\inf\sigma(\square_k)>0$ or $\inf\sigma_e(\square_k)>0$ for all $q\le k\le n-1$.

(5) It follows from Theorem~3.1 in \cite{Hormander04} that for a domain $\Omega$ in a complex hermitian manifold of dimension $n$, if $\inf\sigma_e(\square_q)>0$ for some $q$ between $1$ and $n-1$, then wherever the boundary is $C^3$-smooth, its Levi-form cannot have exactly $n-q-1$ positive and $q$ negative eigenvalues.  A complete characterization of a domain in a complex hermitian manifold, in fact, even in $\C^n$, that has $\inf\sigma_e(\square_q)>0$ or $\inf\sigma(\square_q)>0$ is unknown.

\end{remark}

\bigskip

\noindent{\bf Acknowledgments.} We thank Professor Yum-Tong Siu for drawing our attention to the work of Laufer \cite{Laufer66}, by which our work was inspired. We also thank Professors Mei-Chi Shaw and Emil Straube, and the referee for stimulating discussions and constructive suggestions.

\bibliography{survey}
\providecommand{\bysame}{\leavevmode\hbox
to3em{\hrulefill}\thinspace}

\end{document}